\DeclareMathOperator{\GF}{GF}
\DeclareMathOperator{\Tr}{Tr}
\newcommand{\Z}{{\mathbb Z}}
\newtheorem{theorem}{Theorem}[section]
\newtheorem{proposition}[theorem]{Proposition}
\newtheorem{lemma}[theorem]{Lemma}
\newtheorem{conjecture}[theorem]{Conjecture}
\theoremstyle{remark}
\newtheorem{remark}[theorem]{Remark}
\title{Proof of a Conjecture of Helleseth: Maximal Linear Recursive Sequences of Period $2^{2^n}-1$ Never Have Three-Valued Cross-Correlation}
\author{Daniel J. Katz\footnote{Daniel J. Katz is with the Department of Mathematics, Simon Fraser University, Burnaby, BC V5A 1S6, Canada. ({\tt daniel\_katz\_2@sfu.ca})}}
\date{28 October 2011}
\begin{document}
\maketitle
\section*{Abstract}
We prove a conjecture of Helleseth that claims that for any $n \geq 0$, a pair of binary maximal linear sequences of period $2^{2^n}-1$ can not have a three-valued cross-correlation function.

\section{Introduction}

The binary maximal linear sequences of period $2^m-1$ are the sequences of elements in $\GF(2)$ of the form $\{\Tr(\alpha^{d i + t})\}_{i \in \Z}$ where $\alpha$ is a generator of $\GF(2^m)^*$, $\Tr\colon \GF(2^m) \to \GF(2)$ is the absolute trace, and $d$ and $t$ are integers (or integers modulo $2^m-1$) with $\gcd(d,2^m-1)=1$.
(See the Introduction of \cite{Helleseth}.)
The cross-correlation of any two binary sequences $a=\{a_i\}$ and $b=\{b_i\}$ whose periods are divisors of $2^m-1$ is the function $C_{a,b}(t)=\sum_{i=0}^{2^m-2} (-1)^{a_{i-t}+b_i}$.
In this note, we shall take $a=\{a_i\}=\{\Tr(\alpha^{i})\}$ and $b=\{b_i\}=\{\Tr(\alpha^{d i})\}$, where the {\it decimation} d has $\gcd(d,2^m-1)=1$.
We call decimations with $d \equiv 1, 2, \ldots, 2^{m-1} \pmod{2^m-1}$ {\it trivial decimations} because $\{\Tr(\alpha^{2^k i})\}$ is the same sequence as $\{\Tr(\alpha^i)\}$.
One readily shows that $C_{a,b}(t)$ is the same as
$$C_d(t)=\sum_{x \in \GF(2^m)^*} (-1)^{\Tr(\alpha^{-t} x + x^d)}.$$

For a fixed $d$, we are interested in how many different values $C_d(t)$ takes as $t$ varies over $\Z/(2^m-1)\Z$.
We say that $C_d(t)$ is {\it $v$-valued} to mean that $|\{C_d(t)\colon t\in \Z/(2^m-1)\Z\}|=v$.
Helleseth gave the following criterion for determining whether $C_d(t)$ is two-valued.
\begin{theorem}[Helleseth \cite{Helleseth}, Theorem 3.1(d),(g), Theorem 4.1]\label{HellesethsTheorem}
If $d \equiv 1, 2, \ldots, 2^{m-1} \pmod{2^m}$, then $C_d(t) \in \{-1, 2^m-1\}$ for all $t$.  Otherwise, $C_d(t)$ takes at least three different values.
\end{theorem}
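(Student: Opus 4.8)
The plan is to prove the two halves of the statement separately, and the first is immediate. If $d$ is a trivial decimation, say $d \equiv 2^k \pmod{2^m-1}$, then $x^d = x^{2^k}$ for every $x \in \GF(2^m)$, so $\Tr(x^d) = \Tr(x)$ and hence
\[
C_d(t) = \sum_{x \in \GF(2^m)^*} (-1)^{\Tr((\alpha^{-t}+1)x)} .
\]
By orthogonality of the additive characters of $\GF(2^m)$ this equals $2^m-1$ when $\alpha^{-t}=1$ (that is, $t \equiv 0$) and $-1$ otherwise, so $C_d$ is two-valued with both values in $\{-1,2^m-1\}$.

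For the other half, fix a nontrivial decimation $d$ and write $N = 2^m-1$. The main quantitative input is a pair of power-moment identities. Interchanging the order of summation in $\sum_t C_d(t)^k$ (with $t$ ranging over $\Z/N\Z$), replacing the inner sum over $t$ by a sum of an additive character over $\GF(2^m)^*$, and using that $x \mapsto x^d$ permutes $\GF(2^m)^*$ (since $\gcd(d,N)=1$), I would obtain
\[
\sum_{t} C_d(t) = 1, \qquad \sum_{t} C_d(t)^2 = 2^{2m}-2^m-1 = N^2+N-1 .
\]
The first identity already shows $C_d$ is not constant, since $N \nmid 1$ for $N>1$. Now suppose toward a contradiction that $C_d$ is two-valued, say with values $u<v$ occurring with multiplicities $p,q \ge 1$, where $p+q=N$. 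From $p+q=N$ and $pu+qv=1$ one gets $pu^2+qv^2 = (u+v)-Nuv$, so the second moment forces $(u+v)-Nuv = N^2+N-1$; and $p=(Nv-1)/(v-u) \in \{1,\dots,N-1\}$ forces $u \le -1$ and $v \ge 1$. Writing $u=-a$ with $a \ge 1$ and solving for $v$ gives $v = (N^2+N-1+a)/(1+Na)$; since $\gcd(a,1+Na)=1$, integrality of $v$ means $1+Na$ divides $a^2-a-N-1$, and an elementary estimate of this quantity against $1+Na$ (ruling out $a^2-a-N-1=0$ because $a(a-1)=2^m$ is impossible, and otherwise bounding) leaves only $a=1$, hence $u=-1$, $v=N=2^m-1$, $p=N-1$, $q=1$.

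It remains to exclude this last distribution, and I expect this to be the crux. If $C_d$ took the value $2^m-1$ at some $t$, then every summand in $\sum_{x \in \GF(2^m)^*}(-1)^{\Tr(\alpha^{-t}x+x^d)}$ would equal $+1$, that is, $\Tr(\alpha^{-t}x+x^d)=0$ for all $x \in \GF(2^m)$; adding this identity at $x$, $y$ and $x+y$ and using characteristic $2$ to cancel the linear terms gives $\Tr\!\big(x^d+y^d+(x+y)^d\big)=0$ for all $x,y$. But $d$ is a nontrivial decimation, so $x \mapsto x^d$ is not additive on $\GF(2^m)$ --- an additive power map is a ring endomorphism of the field $\GF(2^m)$, hence an automorphism, hence a power of the Frobenius, which would make $d$ a power of $2$ modulo $N$. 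Therefore the homogeneous form $g(x,y)=x^d+y^d+(x+y)^d$ is not identically zero; replacing $(x,y)$ by $(\lambda x,\lambda y)$ multiplies $g$ by $\lambda^d$, and as $\lambda$ runs over $\GF(2^m)^*$ so does $\lambda^d$, so $g$ assumes some value of trace $1$, contradicting the previous line. Hence $C_d$ takes at least three values. The step I anticipate being most delicate is the number-theoretic pinch that forces $a=1$; the moment computations are short character-sum manipulations and the remainder is the structural fact about power maps.
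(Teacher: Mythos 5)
Your proof is correct. Note first that the paper offers no proof of this theorem at all --- it is quoted from Helleseth --- so the only meaningful comparison is with the adjacent machinery the paper does develop, and your argument runs parallel to it. The trivial-decimation half is the standard orthogonality computation and is fine. For the converse, your two moment identities $\sum_t C_d(t)=1$ and $\sum_t C_d(t)^2=2^{2m}-2^m-1$ match the paper's $P_1=2^m$, $P_2=2^{2m}$ after the shift $W_d=1+C_d$, and the arithmetic pinch is sound: the identity $pu^2+qv^2=(u+v)-Nuv$, the divisibility $1+Na\mid a^2-a-N-1$, and the bounds $1\le a\le N$ (from $|C_d(t)|\le N$) really do leave only $a=1$, while $a(a-1)=2^m$ fails for $m\ge 2$. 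One small patch: the multiplicity condition by itself only forces $u\le 0$, not $u\le -1$; the case $u=0$ must be discarded separately, e.g.\ because the second moment would then give $v=N^2+N-1>N$, which exceeds the trivial bound on $|C_d|$. Your exclusion of the value $2^m-1$ --- symmetrize to get $\Tr\bigl(x^d+y^d+(x+y)^d\bigr)=0$, observe that additivity of $x\mapsto x^d$ would make it a Frobenius power and $d$ trivial, then use homogeneity and surjectivity of $\lambda\mapsto\lambda^d$ to find a trace-one value --- is a correct alternative to the paper's Lemma \ref{Magnitude}, which reaches the same conclusion more directly: for nontrivial $d$ the reduced polynomial $\sum_i x^{2^id}+a\sum_i x^{2^i}$ has all $2m$ exponents distinct modulo $2^m-1$, hence is not the zero function, so $\Tr(ax+x^d)$ cannot vanish identically. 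The paper's route is shorter; yours is self-contained and avoids invoking uniqueness of the reduced polynomial representation.
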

In the same paper, Helleseth conjectured the following.
\begin{conjecture}[Cf.~Helleseth \cite{Helleseth}, Conjecture 5.2]\label{HellesethsConjecture}
If $m$ is a power of $2$, $C_d(t)$ is not three-valued.
\end{conjecture}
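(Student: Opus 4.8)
The plan is to argue by contradiction. Suppose $C_d$ is three-valued; by Theorem~\ref{HellesethsTheorem} the decimation $d$ is then non-trivial, so we may assume $\gcd(d,2^m-1)=1$ and $d\not\equiv 2^k\pmod{2^m-1}$ for all $k$. Since replacing $d$ by $2^k d$ or by its inverse modulo $2^m-1$ does not change the value distribution of $C_d$, we are also free to normalize $d$ within its cyclotomic coset (and to pass to $d^{-1}$), which will matter when we count base-$2$ digits. Write $S(a)=\sum_{x\in\GF(2^m)}(-1)^{\Tr(ax+x^d)}$, so that $C_d(t)=S(\alpha^{-t})-1$ and $S(0)=0$. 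I would begin with the elementary power-moment identities: summing $S(a)^j$ over $a\in\GF(2^m)$ and counting solutions of $x_1+\cdots+x_j=0$, $x_1^d+\cdots+x_j^d=0$ gives $\sum_a S(a)=2^m$, $\sum_a S(a)^2=2^{2m}$ for $j=1,2$, and $\sum_a S(a)^3=2^m N$ for $j=3$, where $N=\#\{(x,y,z)\in\GF(2^m)^3:\,x+y+z=0,\ x^d+y^d+z^d=0\}$ is controlled by the differential behaviour of $x\mapsto x^d$. Writing the three values of $C_d$ as $v_i$ with multiplicities $n_i$ and putting $u_i=v_i+1$, these read $\sum_i n_i=2^m-1$, $\sum_i n_i u_i=2^m$, $\sum_i n_i u_i^2=2^{2m}$, and $\sum_i n_i u_i^3=2^m N$; in particular $\sum_i n_i u_i(u_i-2^m)=0$, which (since $|u_i|\le 2^m$) already forces the spectrum to straddle $-1$, one value above and one below, a constraint to be combined with divisibility below.

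The decisive ingredient is a $2$-adic divisibility constraint on the quantities $S(a)$ (and on the count $N$) that is special to $m=2^n$. The way I would obtain it is to expand each additive character value as a combination of Gauss sums of $\GF(2^m)^{\ast}$ times multiplicative character values, turning $S(a)$ into a character sum whose terms are products $g(\chi)g(\chi')$ of two Gauss sums weighted by a character value at $a$; Stickelberger's congruence then reads off the $2$-adic valuation of each $g(\chi)$ from the base-$2$ digit sum of the integer cutting out $\chi$, and the analogous point-counting (refined Ax-type $p$-divisibility in the style of Adolphson--Sperber, applied to the variety defining $N$) reads off $v_2(N)$ from base-$2$ digit sums of multiples of $d$ modulo $2^m-1$. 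In both guises the problem becomes a purely combinatorial optimization over base-$2$ digit sums of residues modulo $2^{2^n}-1$. The reason $m=2^n$ is special is the rigidity of the cyclotomic-coset structure: every divisor of $m$ is again a power of $2$, so the digit-rotation orbits sit in a single nested tower with no subfield of "coprime" index to exploit, and this rigidity confines the possible valuations into a narrow regime — narrow enough to be incompatible with what a three-valued spectrum demands.

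To finish, I would feed the divisibility regime back into the moment relations. From $\sum_i n_i=2^m-1$, $\sum_i n_i u_i=2^m$, $\sum_i n_i u_i^2=2^{2m}$ one solves by a Vandermonde/Cramer computation for $n_1,n_2,n_3$ as rational functions of $u_1,u_2,u_3$ and $2^m$; imposing positivity, integrality, and $|u_i|\le 2^m$ cuts the possible spectra down to a short list, essentially $u$-triples of the form $\{0,2^s,-2^s\}$ (or $\{0,2^s,-2^t\}$ with $s,t$ tightly linked), with $s$ (resp.\ $s,t$) forced to be large, and the third-moment identity then ties these parameters to $N$. But each surviving candidate requires the $S(a)$ — and the associated value of $N$ — to carry more $2$-divisibility than the Stickelberger analysis over $\GF(2^{2^n})$ permits, so the $n_i$ cannot all be positive integers: contradiction, proving Conjecture~\ref{HellesethsConjecture}.

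I expect the moment bookkeeping and the Cramer elimination to be routine, and the genuine obstacle to be the sharp $2$-adic divisibility theorem over $\GF(2^{2^n})$, whose heart is the combinatorial lemma bounding base-$2$ digit sums of residues modulo $2^{2^n}-1$. Two hazards in particular need care: the Gauss-sum expansion carries denominators $2^m-1$ and lives in a cyclotomic field in which $2$ is wildly ramified, so the valuation bookkeeping has to be done in the correct local ring rather than naively; and the divisibility bound must be pushed to the precision at which it actually contradicts the moment identities — a bare Weil-type or unrefined Ax-type estimate is not enough, and extracting the extra sharpness from the power-of-two structure of $m$ is where essentially all the work lies.
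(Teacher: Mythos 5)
Your outline is not a proof: its ``decisive ingredient''---a sharp $2$-adic divisibility theorem for the values $S(a)$ (and for your count $N$) that is special to $m=2^n$, to be extracted from Stickelberger's congruence and a digit-sum optimization over residues modulo $2^{2^n}-1$---is exactly the content of the conjecture, and you never state it precisely, let alone prove it. You yourself flag that ``essentially all the work lies'' there. Everything you do carry out (the first three power moments, the Cramer elimination for the multiplicities, the observation that the spectrum must straddle $-1$) is standard and holds for every $m$; it cannot by itself see the hypothesis that $m$ is a power of $2$. Two further concrete problems: (i) your third-moment identity is wrong. One has $\sum_a S(a)^3 = 2^m\sum_{x+y+z=0}(-1)^{\Tr(x^d+y^d+z^d)}$, which is a \emph{signed} sum, not $2^m N$ for your solution count $N$; evaluating it correctly gives $2^{2m}\,|V|$ with $V=\{x: 1+x^d+(1+x)^d=0\}$, and since $N=2^m+(2^m-1)|V|$ these quantities do not agree. (ii) The claim that positivity, integrality and $|u_i|\le 2^m$ alone cut the spectrum down to ``essentially $\{0,2^s,-2^s\}$'' is false---many triples satisfy the first three moment identities---so the reduction to a short list already presupposes the unproven divisibility input, and even granting per-value divisibility, classifying the admissible spectra is itself the hard part of Feng's argument.

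For comparison, the paper takes a much shorter route: it quotes Feng's Theorem~\ref{FengsTheorem} (whose proof is indeed of the Stickelberger/digit-sum type you envision, but with the crucial extra hypothesis that $0$ occurs as a Walsh value) and then proves the complementary statement, Theorem~\ref{OurTheorem}, that a three-valued $W_d$ must take the value $0$ somewhere on $\GF(2^m)^*$. That complementary statement is proved for \emph{all} $m$ by entirely elementary means: assuming all three values $A,B,C$ are nonzero, the integrality of $|V|$ in the third-moment identity forces $2^{m+1}\mid AB$ for the two values $A<0<B$, while positivity of the multiplicity $N_C$ together with $|W_d(a)|<2^m$ forces $AB>-2^{m+1}$, a contradiction. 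If you want to salvage your program, you would in effect be reproving Feng's theorem and the paper's Theorem~\ref{OurTheorem} simultaneously; the missing digit-sum lemma would have to be supplied in full, and the case in which none of the Walsh values is $0$ (where Gauss-sum divisibility alone is not known to suffice) would still need the elementary moment/divisibility trick above.
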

In view of Theorem \ref{HellesethsTheorem}, this conjecture says that if $m$ is a power of $2$, then $C_d(t)$ is either two-valued (if $d$ is a trivial decimation) or takes four or more values (if $d$ is nontrivial).
We prove this conjecture in this note.

Feng \cite{Feng} recently proved the following weaker form of Conjecture \ref{HellesethsConjecture}.
\begin{theorem}[Feng \cite{Feng}, Theorem 2]\label{FengsTheorem}
If $m$ is a power of $2$ and $C_d(t)=-1$ for some value of $t$, then $C_d(t)$ cannot be three-valued.
\end{theorem}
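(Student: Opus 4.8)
\section*{Proof strategy}

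The plan is to translate everything into additive character sums over $\GF(2^m)$, extract the first three power moments, and then use the special arithmetic of $2^{2^n}-1$ to rule out a three-valued distribution.

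First I would introduce, for $a \in \GF(2^m)$, the sum $S(a)=\sum_{x\in\GF(2^m)}(-1)^{\Tr(ax+x^d)}$. Since $C_d(t)=S(\alpha^{-t})-1$, the multiset $\{C_d(t)+1 : t\}$ is exactly $\{S(a):a\in\GF(2^m)^*\}$; and because $\gcd(d,2^m-1)=1$ the map $x\mapsto x^d$ permutes $\GF(2^m)$, so $S(0)=0$. Hence the hypothesis ``$C_d(t)=-1$ for some $t$'' says precisely that $0$ lies among the values of $S$ on $\GF(2^m)^*$; combined with the assumption, for contradiction, that $C_d$ is three-valued, it says that $S$ takes exactly three values $0,\beta,\gamma$ on all of $\GF(2^m)$, with $\beta,\gamma$ distinct and nonzero. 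We may assume $d$ is nontrivial, since otherwise Theorem~\ref{HellesethsTheorem} already forbids more than two values.

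Next I would compute $M_k=\sum_{a\in\GF(2^m)}S(a)^k$ by expanding and summing over $a$ first, which gives $M_k=2^m\sum_{x_1+\cdots+x_k=0}(-1)^{\Tr(x_1^d+\cdots+x_k^d)}$. A short computation yields $M_1=2^m$, $M_2=2^{2m}$ (the inner sign is identically $+1$ when $k=2$ in characteristic $2$), and, most usefully, $M_3=2^{2m}\nu$, where $\nu=\#\{v\in\GF(2^m):(1+v)^d=1+v^d\}$; this last identity comes from the substitutions $x_3=x_1+x_2$, $x_2=vx_1$ together with the fact that $\sum_{w\neq0}(-1)^{\Tr(wc)}$ equals $2^m-1$ if $c=0$ and $-1$ otherwise. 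If $0,\beta,\gamma$ occur in $\{S(a):a\in\GF(2^m)\}$ with multiplicities $n_0,n_\beta,n_\gamma$, then $n_0+n_\beta+n_\gamma=2^m$, $\beta n_\beta+\gamma n_\gamma=2^m$ and $\beta^2n_\beta+\gamma^2n_\gamma=2^{2m}$ force $n_\beta=\dfrac{2^m(2^m-\gamma)}{\beta(\beta-\gamma)}$ and $n_\gamma=\dfrac{2^m(2^m-\beta)}{\gamma(\gamma-\beta)}$, and substituting these into the $M_3$ identity collapses everything to the single clean relation
\[ \nu=\beta+\gamma-\frac{\beta\gamma}{2^m}. \]
Note also that $\beta$ and $\gamma$ are even (a sum of $2^m$ terms $\pm1$ has the parity of $2^m$), that $2^m\mid\beta\gamma$, and that $n_0,n_\beta,n_\gamma$ must all be nonnegative integers.

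The last step, which I expect to be the real obstacle, is to show this Diophantine system has no solution when $m=2^n$. It does have the solution $(\beta,\gamma,\nu)=(2^{(m+1)/2},-2^{(m+1)/2},2)$ when $m$ is odd (the Gold decimations), so the power-of-$2$ hypothesis has to be used decisively. The lever is that for $m=2^n$ with $n\ge1$ one has $3\mid 2^m-1$, so the primitive cube roots of unity lie in $\GF(2^m)$; using $1+\omega=\omega^2$ in characteristic $2$ and the fact that $3\nmid d$, one checks that $\omega$ and $\omega^2$ are always solutions of $(1+v)^d=1+v^d$, and the symmetries $v\mapsto1/v$ and $v\mapsto1+v$ of that equation then show $\nu\equiv4\pmod6$, in particular $\nu\ge4$, which already excludes the Gold value $\nu=2$. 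Pushing this further --- exploiting the full Fermat factorization $2^{2^n}-1=\prod_{j<n}(2^{2^j}+1)$ to control $\nu$ and the admissible sizes of $\beta,\gamma$ modulo powers of $2$, and then imposing the integrality and positivity constraints on $n_\beta,n_\gamma,n_0$ --- should eliminate all remaining candidates. I expect the hard part to be exactly this endgame: for a general decimation there is no good handle on $\nu$ beyond Weil-type bounds, which are far too weak here, so the whole argument rests on extracting enough about $\nu$ from the arithmetic of $2^{2^n}-1$, and there is in addition some case-work according to the signs of $\beta$ and $\gamma$.
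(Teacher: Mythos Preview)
First, note that the present paper does not itself prove Theorem~\ref{FengsTheorem}; it is quoted from \cite{Feng} and used as a black box alongside Theorem~\ref{OurTheorem} to obtain Conjecture~\ref{HellesethsConjecture}. So there is no proof here to compare your proposal against, and your attempt must be judged on its own.

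Your setup is correct and standard: the reformulation via $S(a)=W_d(a)$, the observation that $S(0)=0$ so that the three values on all of $\GF(2^m)$ are $0,\beta,\gamma$, the computation of the first three power moments, and the resulting identity $\nu=\beta+\gamma-\beta\gamma/2^m$ (equivalently $(2^m-\beta)(2^m-\gamma)=2^m(2^m-\nu)$) are all right. Your congruence $\nu\equiv4\pmod6$ for even $m$ is also correct.

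The genuine gap is exactly where you place it, and it is not a matter of missing bookkeeping. The constraints you have assembled --- $2^m\mid\beta\gamma$, $|\beta|,|\gamma|<2^m$, the integrality and positivity of $n_\beta,n_\gamma$, and $\nu\equiv4\pmod6$ --- do \emph{not} by themselves force a contradiction: there are plenty of integer triples $(\beta,\gamma,\nu)$ satisfying all of them simultaneously, even when $m$ is a power of $2$. More to the point, nothing you have derived uses the hypothesis $m=2^n$ in any way stronger than ``$m$ is even,'' and the Fermat factorisation of $2^m-1$ gives no further purchase on $\nu$ beyond the mod-$6$ congruence you already have (the subfield tower tells you about solutions of $(1+v)^d=1+v^d$ lying in proper subfields, but says nothing useful about the generic ones). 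Feng's argument does not close by squeezing these elementary constraints; it imports an additional $2$-adic ingredient --- Stickelberger-type information on the valuations of the relevant Gauss sums, exploiting the subfield structure available when $m=2^n$ --- which sharply restricts the possible pair $(\beta,\gamma)$ in a way the first three moments alone cannot. Without an input of that kind, the line of attack you sketch cannot be completed.
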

We prove Conjecture \ref{HellesethsConjecture} by proving the following.
\begin{theorem}\label{OurTheorem}
If $C_d(t)$ is three-valued, then $C_d(t)=-1$ for at least one value of $t$.
\end{theorem}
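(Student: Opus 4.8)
The plan is to reduce the theorem to an arithmetic statement about the value set of the Weil sum $S(a) = \sum_{x\in\GF(2^m)}(-1)^{\Tr(ax+x^d)}$ and then exploit power-moment identities for $C_d$. Put $N = 2^m$. Since $C_d(t)+1 = S(\alpha^{-t})$, the values of $C_d$ are exactly the values of $S$ on $\GF(2^m)^*$, each diminished by $1$; in particular $-1$ is a value of $C_d$ precisely when $S$ vanishes somewhere on $\GF(2^m)^*$. Using $\gcd(d,N-1)=1$ and orthogonality of additive characters, I would first record $S(0)=0$, $\sum_{a\in\GF(2^m)}S(a)=N$, $\sum_{a\in\GF(2^m)}S(a)^2=N^2$, and $\sum_{a\in\GF(2^m)}S(a)^3=N^2Z$, where $Z = |\{s\in\GF(2^m):s^d+(s+1)^d=1\}|$; the third identity comes from writing $\sum_{x+y+z=0}(-1)^{\Tr(x^d+y^d+z^d)}$, collapsing it along the lines through the origin, and using that $u\mapsto u^d$ permutes $\GF(2^m)^*$. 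Because $S(0)=0$, these are precisely the first four power moments $\sum_t(C_d(t)+1)^k$ for $k=0,1,2,3$.

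Now assume $C_d$ is three-valued and, for contradiction, that $-1$ is not among its values. Let $w_1,w_2,w_3$ be the three values taken by $S$ on $\GF(2^m)^*$; these are distinct, nonzero, and --- since $S(a)=N-2\,|\{x:\Tr(ax+x^d)=1\}|$ --- even integers, occurring with positive integer frequencies $f_1,f_2,f_3$ whose sum is $N-1$. Writing $\tilde\sigma_1,\tilde\sigma_2,\tilde\sigma_3$ for the elementary symmetric functions of the $w_i$ and using $w_i^3 = \tilde\sigma_1 w_i^2-\tilde\sigma_2 w_i+\tilde\sigma_3$, the four moment identities become $\sum f_i=N-1$, $\sum f_i w_i=N$, $\sum f_i w_i^2=N^2$, and $N^2Z = N^2\tilde\sigma_1-N\tilde\sigma_2+(N-1)\tilde\sigma_3$; since $\gcd(N,N-1)=1$, the last relation forces $N\mid\tilde\sigma_3 = w_1w_2w_3$. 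I would also note that $Z$ is even (the fixed-point-free involution $s\mapsto s+1$ permutes the solution set) with $2\le Z\le N-2$ for nontrivial $d$, and that solving the linear system expresses each $f_i$ as an explicit rational function of the $w_i$, so that integrality of the $f_i$ is a further constraint.

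The crux is to upgrade $N\mid w_1w_2w_3$ to $w_1w_2w_3=0$, which is the desired contradiction. My approach would be a $2$-adic analysis of the relations $\sum f_i=N-1$, $\sum f_i w_i=N$, $\sum f_i w_i^2=N^2$: writing $w_i=2^{b_i}\epsilon_i$ with $\epsilon_i$ odd and $b_i\ge 1$, the oddness of $N-1$ forces an odd number of the $f_i$ to be odd, and reducing the second and third relations modulo successive powers of $2$ --- using the crude bound $w_i^2\le\sum f_j w_j^2=N^2$, so that a valuation $b_i\ge m$ would force $w_i=\pm N$ --- should eliminate, configuration by configuration, every case in which the $b_i$ all lie below $m$; the divisibility $N\mid w_1w_2w_3$ then combines with $\sum f_i w_i^2=N^2$ and the integrality of the $f_i$ to collapse the product to $0$. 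I expect this to be the main obstacle: the first three moments form a self-consistent, underdetermined linear system admitting spurious rational solutions with every $w_i\neq 0$, so the third-moment divisibility, the parity and the integrality of the frequencies, and the size bound on the $w_i$ must all be brought to bear together, and the residual stubborn cases may require the fourth moment $\sum_{a\in\GF(2^m)}S(a)^4 = N^2(Z_4-N-1)$ --- with $Z_4$ a projective point count --- as well.
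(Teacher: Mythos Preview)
Your setup is exactly the paper's: the Walsh/Weil sums $S(a)=W_d(a)$, the moments $P_0=N-1$, $P_1=N$, $P_2=N^2$, $P_3=N^2 Z$, and the identity
\[
N^2 Z \;=\; N^2\tilde\sigma_1 - N\tilde\sigma_2 + (N-1)\tilde\sigma_3,
\]
from which you correctly extract $N\mid w_1w_2w_3$. The gap is everything after that. Your ``configuration by configuration'' $2$-adic analysis is not carried out, you only speculate that it ``should eliminate'' the cases, and you already anticipate needing the fourth moment. The paper finishes in two short strokes, using neither a case analysis nor $P_4$, and both ideas are absent from your proposal.

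\textbf{First missing idea: sharpen the divisibility.} From $P_2=N^2$ (and $f_j\ge 1$, $w_j\ne 0$) you get $|w_i|<N$, hence $v_2(w_i)\le m-1$ for each $i$. Now suppose $v_2(w_1w_2w_3)=k$ with $m\le k<2m$. In the identity above, $N^2 Z$ and $N^2\tilde\sigma_1$ have valuation $\ge 2m>k$, so $v_2\bigl(N\tilde\sigma_2\bigr)=k$, i.e.\ $v_2(\tilde\sigma_2)=k-m$. But each summand $w_iw_j=(w_1w_2w_3)/w_\ell$ has valuation $\ge k-(m-1)>k-m$, forcing $v_2(\tilde\sigma_2)>k-m$, a contradiction. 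Hence $2^{2m}\mid w_1w_2w_3$, and dividing out any factor (valuation $\le m-1$) gives $2^{m+1}\mid w_iw_j$ for every pair. Your proposal stops at $2^m\mid w_1w_2w_3$ and never feeds the individual bound $v_2(w_i)<m$ back into the moment identity.

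\textbf{Second missing idea: a sign/positivity bound, not integrality.} From $P_1>0$ and $P_1^2=P_2$ the $w_i$ cannot all share a sign; label them so that $w_1<0<w_2$ and $w_3\notin(w_1,w_2)$. Then in the explicit formula (which you wrote down)
\[
f_3=\frac{N^2-N(w_1+w_2)+(N-1)w_1w_2}{(w_3-w_1)(w_3-w_2)}
\]
the denominator is positive, so the numerator is positive. Using only $-N<w_1<0<w_2<N$ in the linear part, this forces $w_1w_2>-2N$. But the sharpened divisibility gives $2N\mid w_1w_2$ with $w_1w_2<0$, hence $w_1w_2\le -2N$; contradiction. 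Note that the paper uses only the \emph{positivity} of $f_3$, not its integrality, and never touches the fourth moment.
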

This, combined with Theorem \ref{FengsTheorem}, immediately implies Conjecture \ref{HellesethsConjecture}.
\begin{remark}
One should note that our theorem does not assume $m$ is a power of $2$, so it is much more general in scope that what is needed.
In fact, one can prove the same theorem for maximal linear sequences derived from fields $\GF(p^m)$ with $p$ odd: this (and more) is done in \cite{Katz}.
\end{remark}

\section{Proof of Theorem \ref{OurTheorem}}

We shall prefer to work in terms of the {\it Walsh transform}, defined as
$$W_d(a)=\sum_{x \in \GF(2^m)} (-1)^{\Tr(x^d+ax)},$$
and it is straightforward to show that
$$W_d(\alpha^{-t})=1+C_d(t).$$
Thus the values of $W_d$ on $\GF(2^m)^*$ are just the values of $C_d$ shifted by $1$.
So $C_d$ is three-valued if and only if $W_d$ is three valued on $\GF(2^m)^*$.

We need to establish a few well-known facts before proceeding to the proof of Theorem \ref{OurTheorem}.
First, we need a simple result which, in rough terms, states that a sequence cannot be perfectly correlated or anti-correlated to a nontrivial decimation of itself.
\begin{lemma}\label{Magnitude}
If $d\not\equiv 1,2,\ldots,2^{m-1} \pmod{2^m-1}$, then $|W_d(a)| < 2^m$.
\end{lemma}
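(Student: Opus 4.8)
The plan is to prove the contrapositive: if $|W_d(a)| = 2^m$ for some $a \in \GF(2^m)$, then $d \equiv 2^k \pmod{2^m-1}$ for some integer $k$. First I would observe that $W_d(a)$ is a sum of exactly $2^m$ terms, each equal to $\pm 1$, so $|W_d(a)| = 2^m$ holds if and only if all of these terms are equal. Since the term coming from $x = 0$ is $(-1)^{\Tr(0)} = +1$, they must all equal $+1$; hence $W_d(a) = 2^m$ and, crucially, $\Tr(x^d + ax) = 0$ for every $x \in \GF(2^m)$. Because the trace is additive and we are in characteristic $2$, this says that $\Tr(x^d) = \Tr(ax)$ as functions $\GF(2^m) \to \GF(2^m)$.

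Next I would compare the two sides as polynomials. Every function $\GF(2^m) \to \GF(2^m)$ is represented by a unique polynomial of degree at most $2^m-1$. The right-hand side is $\Tr(ax) = \sum_{i=0}^{m-1} a^{2^i} x^{2^i}$, already in reduced form. For the left-hand side, write $\Tr(x^d) = \sum_{j=0}^{m-1} x^{d2^j}$ and reduce each exponent modulo $2^m-1$ into the range $\{1,\ldots,2^m-1\}$, which is legitimate since $x^k = x^{k'}$ on $\GF(2^m)$ whenever $k \equiv k' \pmod{2^m-1}$ and both are positive. Here the standing assumption $\gcd(d,2^m-1)=1$ does the work: it forces each reduced exponent $e_j$ to be coprime to $2^m-1$ and hence to lie in $\{1,\ldots,2^m-2\}$ (we may assume $m \ge 2$, since for $m=1$ the hypothesis on $d$ is never satisfied), and it forces the $e_j$ to be pairwise distinct, because $2$ has multiplicative order $m$ modulo $2^m-1$. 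So $\Tr(x^d)$ is represented by the reduced polynomial $\sum_{j=0}^{m-1} x^{e_j}$, a genuinely nonzero polynomial with $m$ distinct monomials, each of coefficient $1$.

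Finally I would equate coefficients of these two reduced polynomials. The right-hand side cannot be the zero polynomial, so $a \ne 0$; then every $a^{2^i}$ is nonzero, so every monomial $x^{2^i}$ with $0 \le i \le m-1$ must occur on the left, i.e. $\{2^i : 0 \le i \le m-1\} \subseteq \{e_0,\ldots,e_{m-1}\}$, and since both sets have $m$ elements they coincide. In particular $d \equiv e_0 \equiv 2^k \pmod{2^m-1}$ for some $k$. Since $1, 2, \ldots, 2^{m-1}$ are precisely the residues modulo $2^m-1$ of the powers of $2$, this contradicts the hypothesis $d \not\equiv 1,2,\ldots,2^{m-1}\pmod{2^m-1}$, and the lemma follows.

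I expect the only delicate point to be the bookkeeping in the middle step: verifying that $\gcd(d,2^m-1)=1$ makes the cyclotomic coset $\{d2^j \bmod (2^m-1)\}$ have full size $m$ and avoid the exponents $0$ and $2^m-1$, so that $\Tr(x^d)$ has a clean reduced representation in which no cancellation occurs; once that is in place the coefficient comparison is immediate. (One could alternatively deduce $\Tr(x^d+ax) \equiv 0$ from the moment identities $\sum_{a} W_d(a) = 2^m$ and $\sum_{a} W_d(a)^2 = 2^{2m}$, but the term-by-term argument above is shorter.)
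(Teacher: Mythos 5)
Your proof is correct and follows essentially the same route as the paper: both arguments reduce $|W_d(a)|=2^m$ to the identity $\Tr(x^d+ax)\equiv 0$ via the $x=0$ term, and both rule this out by observing that, after reducing exponents modulo $2^m-1$, the monomials $x^{d},x^{2d},\ldots,x^{2^{m-1}d}$ are pairwise distinct (by $\gcd(d,2^m-1)=1$) and disjoint from $x,x^2,\ldots,x^{2^{m-1}}$ (by the hypothesis on $d$), so no cancellation can occur. Your write-up is a slightly more detailed, contrapositive phrasing of the paper's one-line exponent argument; no changes needed.
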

\begin{proof}
From the definition of $W_d(a)$ as the sum $\sum_{x \in \GF(2^m)} (-1)^{\Tr(x^d+a x)}$
of $2^m$ terms in $\{1,-1\}$, it suffices to prove that the said terms are not all of the same sign.
The $x=0$ term is $1$, and so the only way that all the terms can have the same sign is if 
$$\Tr(x^d+a x)=(x^d+x^{2 d} + \cdots + x^{2^{m-1} d}) + a(x+x^2+\cdots+x^{2^{m-1}})$$
equals $0$ for all $x \in \GF(2^m)$, i.e., if and only if this polynomial is zero modulo $x^{2^m} - x$.
Given our assumption on $d$, all the exponents of $x$ that appear in the polynomial as expressed above are distinct modulo $2^m-1$, so this cannot happen.
\end{proof}

We consider the first few power moments of $W_d$, with the $r$th power moment defined to be
$$P_r = \sum_{a \in \GF(2^m)^*} W_d(a)^r,$$
where we use the convention $0^0=1$ in evaluating $P_0$.
The power moments of $C_d$ have been calculated by Helleseth, whence it is easy to obtain those of $W_d$.

\begin{proposition}[See Helleseth \cite{Helleseth}]\label{Moments}
We have
\begin{enumerate}[(a)]
\item $P_0=2^m-1$,
\item $P_1=2^m$,\label{FirstMoment}
\item $P_2=2^{2 m}$, and\label{SecondMoment}
\item $P_3=2^{2m} |V|$,
\end{enumerate}
where $V$ is the set of roots of $1+x^d+(1+x)^d$ in $\GF(2^m)$.
\end{proposition}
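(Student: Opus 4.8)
The plan is to compute each moment by the standard Fourier-analytic device over $\GF(2^m)$: first pass from sums over $\GF(2^m)^*$ to sums over all of $\GF(2^m)$, then expand the power of $W_d$, interchange the order of summation, and collapse one variable using the orthogonality relation
$$\sum_{a\in\GF(2^m)}(-1)^{\Tr(ac)}=\begin{cases}2^m & \text{if }c=0,\\ 0 & \text{if }c\ne 0.\end{cases}$$
Part (a) is immediate, since every summand of $P_0$ is $1$ under the convention $0^0=1$, and there are $2^m-1$ of them.

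For the remaining parts I would first record that $W_d(0)=\sum_{x\in\GF(2^m)}(-1)^{\Tr(x^d)}=0$: because $\gcd(d,2^m-1)=1$, the map $x\mapsto x^d$ permutes $\GF(2^m)$, so this sum equals $\sum_{x}(-1)^{\Tr(x)}=0$. Hence for $r\ge 1$ one may harmlessly adjoin the $a=0$ term and write $P_r=\sum_{a\in\GF(2^m)}W_d(a)^r$. Expanding and interchanging summation gives
$$P_r=\sum_{x_1,\ldots,x_r\in\GF(2^m)}(-1)^{\Tr(x_1^d+\cdots+x_r^d)}\sum_{a\in\GF(2^m)}(-1)^{\Tr(a(x_1+\cdots+x_r))},$$
and orthogonality retains only the tuples with $x_1+\cdots+x_r=0$, each weighted by $2^m$. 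For $r=1$ this forces $x_1=0$, so $P_1=2^m$. For $r=2$ it forces $x_2=x_1$ (characteristic $2$), so $P_2=2^m\sum_{x\in\GF(2^m)}(-1)^{\Tr(0)}=2^{2m}$. For $r=3$ it forces $x_3=x_1+x_2$, leaving $P_3=2^m\sum_{x,y\in\GF(2^m)}(-1)^{\Tr(x^d+y^d+(x+y)^d)}$.

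To finish (d) I would evaluate this last double sum. The $y=0$ terms contribute $2^m$. For $y\ne 0$, substitute $x=yu$, so that $x^d+y^d+(x+y)^d=y^d(1+u^d+(1+u)^d)$; as $(x,y)$ runs over $\GF(2^m)\times\GF(2^m)^*$, the pair $(u,w)$ with $w=y^d$ runs over $\GF(2^m)\times\GF(2^m)^*$ (again using $\gcd(d,2^m-1)=1$). Thus the double sum becomes $2^m+\sum_{u\in\GF(2^m)}\sum_{w\in\GF(2^m)^*}(-1)^{\Tr(w(1+u^d+(1+u)^d))}$, and the inner sum over $w$ is $2^m-1$ when $u\in V$ and $-1$ otherwise. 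Summing over $u$ yields $2^m+|V|(2^m-1)-(2^m-|V|)=2^m|V|$, whence $P_3=2^{2m}|V|$. The only step requiring any thought is this last identification of the double sum with $|V|$; the rest is bookkeeping, and there is no real obstacle — the computation is mechanical once the reduction $W_d(0)=0$ and the orthogonality relation are in hand.
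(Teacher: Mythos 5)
Your proof is correct: every step checks out, including the two places that need $\gcd(d,2^m-1)=1$ (to get $W_d(0)=0$ and to let $w=y^d$ range over $\GF(2^m)^*$), and the final count $|V|(2^m-1)-(2^m-|V|)+2^m=2^m|V|$ is right. The paper, however, does not prove this proposition at all: it is stated with a citation to Helleseth, and the surrounding text says only that Helleseth computed the power moments of $C_d$, from which those of $W_d$ follow by substituting $W_d(\alpha^{-t})=1+C_d(t)$ and expanding binomially (e.g.\ $P_2=\sum(1+C_d)^2=P_0^{C}+2P_1^{C}+P_2^{C}$ in the obvious notation). So your route is genuinely different in presentation: you work directly with $W_d$ and the orthogonality relation $\sum_a(-1)^{\Tr(ac)}=2^m[c=0]$, which gives a short, self-contained derivation and avoids any translation from $C_d$; the paper's route is essentially free given the literature but leaves the reader to consult Helleseth for the underlying character-sum computation (which is, in substance, the same orthogonality argument you carried out). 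One tiny presentational point: when you say the constraint $x_1+x_2=0$ makes the $r=2$ summand $(-1)^{\Tr(0)}$, it is worth noting explicitly that $x_1^d+x_2^d=2x_1^d=0$ in characteristic $2$ --- but this is exactly what you meant, and nothing is missing.
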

From these one can readily deduce the following, which also appears as calculations in \cite{Feng}.
\begin{proposition}\label{Counts}
Suppose that $W_d(a)$ is three-valued on $\GF(2^m)^*$ with values $A$, $B$, and $C$, and that $W_d(a)=C$ for $N_C$ values of $a \in \GF(2^m)^*$.
Then
$$N_C = \frac{2^{2 m} - 2^m (A+B) + (2^m-1) A B}{(C-A)(C-B)}$$
and
$$2^{2 m} |V| = 2^{2 m} (A+B+C) - 2^m(A B + B C + C A) + (2^m-1) A B C,$$
where $V$ is the set of roots of $1+x^d+(1+x)^d$ in $\GF(2^m)$.
\end{proposition}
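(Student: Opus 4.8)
The plan is to treat Proposition \ref{Counts} as a short exercise in elementary symmetric function manipulation built on the moments from Proposition \ref{Moments}. Write $N_A$, $N_B$, $N_C$ for the number of $a \in \GF(2^m)^*$ at which $W_d$ takes the values $A$, $B$, $C$ respectively, so that parts (a), (b), (c) of Proposition \ref{Moments} become $N_A + N_B + N_C = 2^m - 1$, $A N_A + B N_B + C N_C = 2^m$, and $A^2 N_A + B^2 N_B + C^2 N_C = 2^{2m}$. These three equations, together with part (d), are all the input we need.

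For the first formula I would avoid solving the Vandermonde system head-on and instead observe that the polynomial $(x-A)(x-B)$ vanishes at $x=A$ and $x=B$ and equals $(C-A)(C-B)$ at $x=C$. Hence $\sum_{a \in \GF(2^m)^*}(W_d(a)-A)(W_d(a)-B)$ picks out exactly $(C-A)(C-B)N_C$. Expanding the product and using the three moment identities above rewrites the left side as $P_2 - (A+B)P_1 + AB\,P_0 = 2^{2m} - 2^m(A+B) + (2^m-1)AB$; since $A$, $B$, $C$ are distinct (as $W_d$ is genuinely three-valued) the factor $(C-A)(C-B)$ is nonzero, and dividing yields the claimed expression for $N_C$.

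For the second formula the key remark is that each value $W_d(a)$ with $a \in \GF(2^m)^*$ is a root of $(x-A)(x-B)(x-C) = x^3 - e_1 x^2 + e_2 x - e_3$, where $e_1 = A+B+C$, $e_2 = AB+BC+CA$, $e_3 = ABC$. Therefore $W_d(a)^3 = e_1 W_d(a)^2 - e_2 W_d(a) + e_3$ identically on $\GF(2^m)^*$, and summing over $a$ while invoking Proposition \ref{Moments} gives $P_3 = e_1 P_2 - e_2 P_1 + e_3 P_0$, i.e.\ $2^{2m}|V| = 2^{2m}(A+B+C) - 2^m(AB+BC+CA) + (2^m-1)ABC$, as asserted.

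I expect no real obstacle here: the argument is purely formal once the power moments are available. The only points needing a moment's care are that $A$, $B$, $C$ must be pairwise distinct for the division in the first formula to be legitimate (guaranteed by the hypothesis that $W_d$ is exactly three-valued), and the bookkeeping convention $0^0 = 1$ entering $P_0$, which is already baked into Proposition \ref{Moments}.
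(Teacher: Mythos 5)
Your proposal is correct and follows essentially the same route as the paper: both formulas come from evaluating $\sum_{a}(W_d(a)-A)(W_d(a)-B)$ and $\sum_{a}(W_d(a)-A)(W_d(a)-B)(W_d(a)-C)$ in two ways via the power moments of Proposition \ref{Moments}. Your phrasing of the cubic identity as $P_3=e_1P_2-e_2P_1+e_3P_0$ is just a rearrangement of the paper's observation that the second sum vanishes, so there is nothing further to add.
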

\begin{proof}
To get $N_C$, compute $\sum_{a \in \GF(2^m)^*} (W_d(a)-A)(W_d-B)$.
On the one hand, $W_d(a) \in \{A,B,C\}$ implies that the sum is $N_C(C-A)(C-B)$.  
On the other hand, one can also calculate the sum in terms of power moments as $P_2 - (A+B) P_1 + A B P_0$, and then use the values given in Proposition \ref{Moments}.
To get $|V|$, one can employ the same approach, this time with the sum $\sum_{a \in \GF(2^m)^*} (W_d(a)-A)(W_d(a)-B)(W_d(a)-C)$: on the one hand, it is zero, and on the other, it can be expressed in terms of $P_0$, $P_1$, $P_2$, and $P_3$.
\end{proof}
This can be used to prove an interesting result about the $2$-divisibility of the values assumed by $W_d(a)$.
\begin{lemma}\label{Divisibility}
Suppose that $W_d(a)$ takes precisely three values $A$, $B$, and $C$ for $a \in \GF(2^m)^*$.  If all three values are non-zero, then $2^{m+1} \mid A B$.
\end{lemma}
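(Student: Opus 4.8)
The plan is to extract the divisibility directly from the third power moment, i.e.\ from the second identity of Proposition~\ref{Counts}, using Lemma~\ref{Magnitude} only to bound the $2$-adic valuation of each value from above. Throughout, let $v_2$ denote the $2$-adic valuation. Since $W_d$ is three-valued, Theorem~\ref{HellesethsTheorem} forces $d$ to be nontrivial, so Lemma~\ref{Magnitude} applies and gives $|W_d(a)| < 2^m$ for every $a$; hence each of the three nonzero integer values $A,B,C$ has absolute value strictly less than $2^m$, so $v_2$ of each is at most $m-1$. Relabel the three values so that $v_2(A)\le v_2(B)\le v_2(C)$ and write $u=v_2(A)$, $v=v_2(B)$, $w=v_2(C)$; it then suffices to prove $u+v\ge m+1$, because that makes every pairwise product of the three values divisible by $2^{m+1}$, in particular the originally labelled $AB$.

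Next I would rewrite the second identity of Proposition~\ref{Counts}. With $s_1=A+B+C$, $s_2=AB+BC+CA$, and $s_3=ABC$ it reads $(2^m-1)s_3 = 2^m\bigl(2^m(|V|-s_1)+s_2\bigr)$, where $V$ is the root set of $1+x^d+(1+x)^d$. Taking $v_2$ of both sides and using $\gcd(2,2^m-1)=1$ yields
$v_2(s_3) = m + v_2\!\bigl(2^m(|V|-s_1)+s_2\bigr) \ge m + \min\bigl(m,\,v_2(s_2)\bigr)$,
since the first summand inside the bracket is divisible by $2^m$. On the other hand $v_2(s_3)=u+v+w$ because all three values are nonzero, and $v_2(s_2)\ge u+v$ because the three pairwise products $AB,BC,CA$ have $2$-adic valuations $u+v\le u+w\le v+w$, so $u+v$ is the smallest of them.

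Finally I would argue by contradiction. If $u+v\le m$, then $\min\bigl(m,v_2(s_2)\bigr)\ge\min(m,u+v)=u+v$, so the displayed inequality becomes $u+v+w=v_2(s_3)\ge m+(u+v)$, forcing $w\ge m$, which contradicts $w\le m-1$. Hence $u+v\ge m+1$, which is the claim. The only delicate point — and the step I would check most carefully — is the valuation bookkeeping in $v_2\bigl(2^m(|V|-s_1)+s_2\bigr)$: one must be sure that the factor $2^m$ in the first summand, together with $v_2(s_2)\ge u+v$, really does force the bracket to have valuation at least $u+v$ whenever $u+v\le m$, and here it is reassuring that only a lower bound is needed, so possible cancellation in $s_2$ or the possibility $|V|=s_1$ causes no trouble. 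It is worth remarking that the first identity of Proposition~\ref{Counts} — the integrality and parity of $N_C$ — is not needed for this argument, although it furnishes an alternative route to the same conclusion.
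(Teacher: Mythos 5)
Your proof is correct and takes essentially the same approach as the paper: both arguments apply $2$-adic valuations to the third-power-moment identity of Proposition~\ref{Counts}, using Lemma~\ref{Magnitude} (via Theorem~\ref{HellesethsTheorem}) to cap each value's valuation at $m-1$. The only cosmetic difference is that the paper first isolates $(2^m-1)ABC$ as the unique term of minimal valuation to conclude $2^{2m}\mid ABC$ and then divides out $C$, whereas you rearrange the identity and derive $v_2(A)+v_2(B)\ge m+1$ directly by contradiction.
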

\begin{proof}
From Proposition \ref{Counts} we have
\begin{equation}\label{Vequation}
2^{2 m} |V| = 2^{2 m} (A+B+C) - 2^m(A B + B C + C A) + (2^m-1) A B C,
\end{equation}
where $V$ is the set of roots of $1+x^d+(1+x)^d$ in $\GF(2^m)$.
Suppose that $A,B,C\not=0$; then Lemma \ref{Magnitude} shows that $A,B,C\not\equiv 0 \pmod{2^m}$.
(We clearly have a nontrivial decimation by Theorem \ref{HellesethsTheorem} since $W_d$ is three-valued on $\GF(2^m)^*$, and hence $C_d$ is three-valued.)
Then the term $(2^m-1) A B C$ is divisible by fewer powers of $2$ than the other terms on the right hand side of \eqref{Vequation}, so $2^{2 m} |V|$ and $A B C$ have exactly the same power of $2$ in their respective prime factorizations, and so $2^{2 m} | A B C$.
Since $C\not\equiv 0 \pmod{2^m}$, this means that $2^{m+1} | A B$.
\end{proof}

Now we are ready to prove Theorem \ref{OurTheorem}.
We assume that $C_d$ is three-valued and that none of these values is $-1$ in order to show a contradiction.
Then $W_d(a)$ is three-valued for $a \in \GF(2^m)^*$ with the three nonzero values $A$, $B$, $C$.
Note that Proposition \ref{Moments}\eqref{FirstMoment} shows that
$$\sum_{a \in \GF(2^m)^*} W_d(a)=2^m,$$
so we cannot have $A,B,C < 0$.
Furthermore, by parts \eqref{FirstMoment} and \eqref{SecondMoment} of the same proposition,
$$\left(\sum_{a \in \GF(2^m)^*} W_d\right)^2 = 2^{2 m} = \sum_{a \in \GF(2^m)^*} W_d(a)^2,$$
so we cannot have $A,B,C > 0$.
Then without loss of generality, we may take $A < 0 < B$ and $C$ not between $A$ and $B$.
Then by Proposition \ref{Counts}, the number $N_C$ of $a \in \GF(2^m)^*$ such that $W_d(a)=C$ is 
$$N_C = \frac{2^{2 m} - 2^m (A+B) + (2^m-1) A B}{(C-A)(C-B)}.$$
Since $C$ is not between $A$ and $B$, the denominator is positive, so
$$2^{2 m} - 2^m (A+B) + (2^m-1) A B > 0.$$
We use Lemma \ref{Magnitude} and the fact that $A < 0$ and $B > 0$ to see that
$$2^{2 m} - 2^m (-(2^m-1)+1) + (2^m-1) A B > 0,$$
so that $A B > -2^{m+1}$.
But by Lemma \ref{Divisibility} and the fact that $A < 0 < B$, we have that $A B \leq -2^{m+1}$, which gives the contradiction that completes the proof of Theorem \ref{OurTheorem}.

\section*{Acknowledgements}

The author gives warm thanks to Robert Calderbank and Jonathan Jedwab.
He thanks Robert Calderbank for introducing him to this problem, for stimulating discussions, and for unflagging encouragement.
Jonathan Jedwab has also provided great encouragement, and must also be thanked for drawing the author's attention to \cite{Feng} when it appeared.

\end{document}